\numberwithin{equation}{section}
\newtheorem{theorem}{Theorem}[section]
\newtheorem{lemma}[theorem]{Lemma}
\newtheorem{proposition}[theorem]{Proposition}
\newtheorem{corollary}[theorem]{Corollary}
\theoremstyle{definition}
\newtheorem{definition}[theorem]{Definition}
\newtheorem{example}[theorem]{Example}
\newtheorem{remark}[theorem]{Remark}
\newcolumntype{P}[1]{>{\centering\arraybackslash}p{#1}}
\title{Describing realizable Gauss diagrams using the concepts of parity or bipartate graphs}
\author{Alexei Lisitsa}
\address{Department of Computer Science, University of Liverpool, Liverpool, UK}
\author{Viktor Lopatkin}
\address{National Research University Higher School of Economics, Faculty of Computer Science, Pokrovsky Boulevard 11, Moscow, 109028 Russia;\ wickktor@gmail.com}
\author{Alexei Vernitski}
\address{Department of Mathematical Sciences, University of Essex, Colchester, UK}
\begin{document}

\maketitle
\begin{abstract}
Two recent publications describe realizable Gauss diagrams using conditions stating that the number of chords in certain sets of chords is even or odd. We demonstrate that these descriptions are incorrect by finding multiple counter-examples. However, the idea of having a parity-based description of realizable Gauss diagrams is attractive. We recall that realizability of Gauss diagrams as touch curves can be described via bipartite graphs. We show that realizable Gauss diagrams can be described via bipartite graphs.
\end{abstract}

\section{Introduction}

A Gauss diagram captures some information about a closed plane curve. This is not a one-to-one correspondence; that is, two different closed plane curves can have the same Gauss diagram. An interlacement graph captures some information about a Gauss diagram. This is not a one-to-one correspondence; that is, two different Gauss diagram can have the same interlacement graph. 

Not every Gauss diagram corresponds to a closed plane curve. If a Gauss diagram does correspond to a closed plane curve, the Gauss diagram is called realizable. A remarkable fact which has hardly ever been stated explicitly is that it is possible to decide whether a Gauss diagram is realizable just by inspecting its interlacement graph; that is, the interlacement graph retains enough information about the Gauss diagram to decide if the Gauss diagram is realizable. 

As we describe below, several recent publications \cite{GL18,B19,GL} offer especially simple elegant descriptions of realizability of Gauss diagrams expressed (implicitly) in the language of their interlacement graphs. Every condition in these descriptions is based on parity, that is, it checks whether a certain set of edges in the graph has an even or odd size. Unfortunately, as we show below, these descriptions are wrong, and we generate a family of counterexamples. 

Nevertheless, the idea of having a parity-based description of realizable Gauss diagrams in the language of their interlacement graphs is attractive, so in this paper we achieve one possible description of this kind. On the one hand, we note that realizability of Gauss diagrams as touch curves can be described via bipartite graphs. On the other hand, we note that a recent publication \cite{STZ09} gives a (very inefficient) description of realizability of Gauss diagrams expressed (implicitly) in the language of their interlacement graphs. Combining the two ideas, we show that a Gauss diagram is realizable if and only if a certain modification of its interlacement graph is bipartite. Not only this description is theoretically interesting as based on parity and on the interlacement graphs of Gauss diagrams, but also it can be easily implemented as a fast and simple algorithm for checking realizability of Gauss diagrams.

\section{An overview of the history of realizability}

The Gauss code of the curve was first defined by C.F. Gauss \cite{Gauss}. This sequence also determines a chord diagram which in the context of the study of plane curves is called a Gauss diagram, and the question raised by Gauss is \textit{to recognize which chord diagrams arise from some plane curve.}

The Gauss problem about realizability of a chord diagram is an old one and has been solved in many different ways. We briefly survey some known realizability descriptions for Gauss diagrams which are most relevant to our study. In 1936, M. Dehn \cite{D} solved the Gauss problems for the first time, finding an algorithmic solution based on the existence of a touch Jordan curve which is the image of a transformation of the knot diagram by successive splits replacing all the crossings. (An improved and clearer version of Dehn's algorithm can be found in \cite{RT84}.)
Much later,  in 1976, L. Lovasz and M.L. Marx \cite{LM} found  alternative criteria for realizability. 
At the same time, R.C. Read and P. Rosenstiehl \cite{R76,RR} showed that one can express realizability in terms of \emph{interlacement graphs}.  
The last characterization is based on the tripartition of such graphs into cycles, cocycles and bicycles. H. de Fraysseix and P. Ossona de Mendz in \cite{dFOdM97}, using a modification of Dehn's ideas, obtained a new characterization of Gauss codes which led them to a short self-contained proof of Rosenstiehl's characterization. Next, B. Shtylla, L. Traldi and L. Zulli in \cite{STZ09} reformulated answers to the Gauss problem given by P. Rosenstiehl and by H. de Fraysseix and P. Ossona de Mendz to give new graph-theoretic and algebraic characterizations of realizable Gauss codes. 

We also refer the reader to the book of \'E. Ghys \cite[Gauss is back: curves in the plane]{Ghys} for more discussion and bibliography, and other realization criteria. See also \cite{M} for another parity-based approach to chord diagrams.

According to M. Dehn result \cite{D, RT84} the Gauss problem reduces to a touch realization of a diagram obtained from the first one by some transformation. Further, being touch-realizable is equivalent to the corresponding interlacement graph being bipartite. 



\section{Definitions}



Consider a plane curve $\gamma$ whose self-intersections are double points, as in Figure Fig.\ref{fig:gauss-ex} a. Assume that each crossing is labelled by some label. Walk along the curve until returning back to where we started, and generate a word $w$ which records the labels of the crossings in the order we pass over them on the curve. For example, starting in Figure Fig.\ref{fig:gauss-ex} a at the crossing 1 and moving along the curve to the right initially (and turning as the curve turns), we obtain $w=12334124$. The word $w$ is a double occurrence word, that is, each letter features in it exactly twice. It is called the \textit{Gauss code} of the curve $\gamma$. Obviously, a curve can have many Gauss codes, since the definition of the Gauss code depends on the starting point and the direction of the walk around the curve. To capture all Gauss codes of a curve in one object, it is convenient to represent Gauss codes of a curve diagrammatically; namely, if we place the letters of a Gauss code $w$ around a circle in the order in which the letters occur in $w$ and if we join up each pair of identical letters by a chord, then the obtained chord diagram is called the Gauss diagram $\mathfrak{G}(\gamma)$ of the plane curve (see Fig.\ref{fig:gauss-ex} a), and b)).

\begin{figure}[h!]
    \centering
    \begin{tikzpicture}[scale=0.7]
  \draw[line width =2, name path= a](0,0) to [out= 30, in = 30] (-0.5, 2.5);
\draw[line width =2, name path= b](-0.5,2.5) to [out= 210, in = 180] (1.2, -1);
\draw[line width =2, name path= c] (1.2,-1) to [out= 0, in = 0] (-0.5, 3.3);
\draw[line width =2, name path= d] (-0.5,3.3) to [out= 180, in = 160] (-2, -1);
\draw[line width =2, name path= e] (-2,-1) to [out= 340, in = 270] (1.2, 0.5);
\draw[line width =2, name path= f] (1.2,0.5) to [out= 90, in = 0] (-1.2, 1.5);
\draw[line width =2, name path= g] (-1.2,1.5) to [out = 180, in = 90] (-2, 0.75) to [out= 270, in = 210] (0, 0);

 \fill [name intersections={of=f and b, by={1}}]
(1) circle (3pt) node[above left] {$1$};

 \fill [name intersections={of=a and f, by={2}}]
(2) circle (3pt) node[above right] {$2$};

 \fill [name intersections={of=e and b, by={3}}]
(3) circle (3pt) node[above] {$3$};

 \fill [name intersections={of=g and b, by={4}}]
(4) circle (3pt) node[below] {$4$};
\begin{scope}[xshift = 6cm, yshift = 1cm]
     \draw[line width =2] (0,0) circle (2.2);
     
     {\foreach \angle/ \label in
   { 90/1, 135/4, 180/2, 225/1, 270/4, 315/3, 0/3, 45/2}
   {
    \fill(\angle:2.5) node{$\label$};
    \fill(\angle:2.2) circle (3pt) ;
    }
}
     \draw[line width = 2] (90:2.2) -- (225:2.2);
     \draw[line width = 2] (135:2.2) -- (270:2.2);
     \draw[line width = 2] (180:2.2) -- (45:2.2);
     \draw[line width = 2] (0:2.2) to [out = 180, in = 140] (315:2.2);
     
\end{scope}
\begin{scope}[xshift =12cm, yshift = 0.5cm]
  \draw[line width = 2] (90:1.7) -- (200:1.7);
  \draw[line width = 2] (90:1.7) -- (340:1.7);
  \draw[line width = 2] (200:1.7) -- (340:1.7);
  
  \fill(90:2) node{$1$};
  \fill(90:1.7) circle(3pt);
  
  \fill(200:2) node{$2$};
  \fill(200:1.7) circle(3pt);
  
  \fill(340:2) node{$4$};
  \fill(340:1.7) circle(3pt);
  
  \fill(40:3) node{$3$};
  \fill(40:2.7) circle(3pt);
  
 \end{scope}
  \fill(0,-2.5) node{$a)$};
  \fill(5,-2.5) node{$b)$};
  \fill(10,-2.5) node{$c)$};
    \end{tikzpicture}
    \caption{Example of a) a planar curve; b) its Gauss diagram and c) its interlacement graph. The corresponding Gauss code is $\mathbf{12334124}$}
    \label{fig:gauss-ex}
\end{figure}
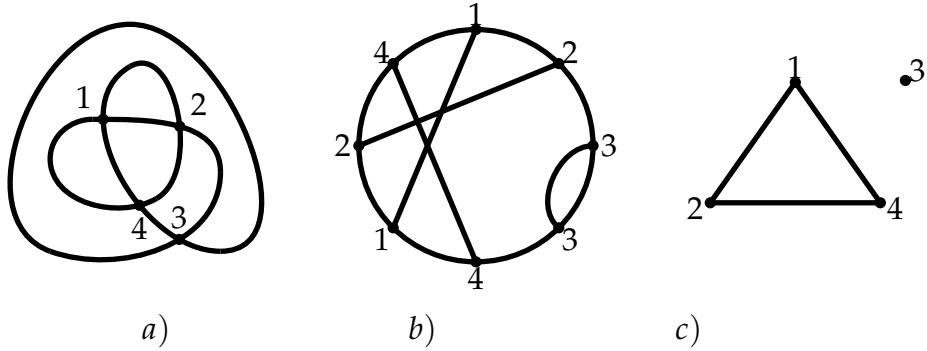

A Gauss diagram $\mathfrak{G}$ is called \textit{realizable} if there is a closed plane curve $\gamma$ such that $\mathfrak{G} = \mathfrak{G}(\gamma)$.

With any chord diagram one can associate a graph, which is called a a \emph{circle graph} or a \textit{chord-intersection graph} or an \textit{interlacement graph}, whose set of vertices is the set of all chords of the chord diagram, and in which a pair of two vertices $u, v$ is connected with an edge if and only if the chords $u, v$ intersect (see Fig. \ref{fig:gauss-ex} c).

\section{Realizability descriptions based on interlacement graphs} 

The first of the following two conditions was known to Gauss, and the second one features in a number of recent papers, starting from \cite{CE93,CE96}. 

\begin{definition}[{The Evenness Conditions}]\label{evennes}
 We say that a Gauss diagram $\mathfrak{G}$ satisfies the evenness conditions if its interlacement graph $\Gamma(\mathfrak{G})$ has the
following properties:
\begin{enumerate}
    \item the degree of each vertex is even,
    \item each pair of non-neighboring vertices has an even number of common neighbors (possibly, zero).
\end{enumerate}
\end{definition}

It is well known that the evenness conditions are necessary but not sufficient for a Gauss diagram to be realizable; see \cite{CE96}, and counterexamples below in Figures \ref{counterexample_to_n=9}, \ref{fig:counter10}, \ref{fig_for_STZ1_not_realizable}.

For the first time the relizability description for Gauss words and diagrams expressed solely in terms of interlacement graphs was found by P. Rosenstiehl in \cite{R76}. Following  \cite{dFOdM97} Rozentiehl's conditions can be formulated as follows. 

\begin{theorem}[The Rosentiehl Criteria of Realizability, {\cite{R76, dFOdM97}}]~\\
 A Gauss diagram $\mathfrak{G}$ is realizable if and only if its interlacement graph $\Gamma(\mathfrak{G}) = (V,E)$  has the following properties:
 \begin{enumerate}
     \item the degree of each vertex is even,
     \item there is a subset of vertices $A \subset V$ of $\Gamma(\mathfrak{G})$ such that the following two conditions are equivalent for any two vertices $u$ and $v$:  
    \begin{enumerate}
    \item[i)] the vertices $u$ and $v$ have an odd number of common neighbors, 
    \item[ii)] the vertices $u$ and $v$ are neighbors and either both are in $A$ or neither is in $A$  
    \end{enumerate} 
 \end{enumerate}
\end{theorem}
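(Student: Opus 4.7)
The strategy I would pursue is to reduce the theorem to the touch-realizability criterion recalled earlier in the excerpt, following the approach of de Fraysseix and Ossona de Mendez \cite{dFOdM97}: a chord diagram is touch-realizable if and only if its interlacement graph is bipartite. For any subset $A\subset V$, one can perform Dehn's local transformation at every chord, with the two possible local choices indexed by whether the chord lies in $A$ or not, producing a modified chord diagram $\mathfrak{G}_A$. The plan is to prove that $\mathfrak{G}$ is realizable by a generic plane curve if and only if $\Gamma(\mathfrak{G}_A)$ is bipartite for some $A$, and that this latter condition, translated back to $\Gamma(\mathfrak{G})$, is exactly condition $(2)$.

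For the forward direction, necessity of condition $(1)$ is classical and originally due to Gauss: given a realizing curve $\gamma$, each chord $c$ splits the walk along $\gamma$ into two arcs, and a parity argument shows each arc crosses the other chords an even number of times, yielding even degrees in $\Gamma(\mathfrak{G})$. To produce $A$, I would apply Dehn's transformation to $\gamma$, obtaining a touch curve that realizes $\mathfrak{G}_A$ for a canonical $A\subset V$ read off from the local geometry at each crossing. The touch-realizability criterion then supplies a bipartition of $\Gamma(\mathfrak{G}_A)$, and translating the ``odd common neighbors'' relation in $\Gamma(\mathfrak{G})$ through the explicit effect of the Dehn transformation on interlacements should recover condition $(2)$.

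For the converse, assume $(1)$ and $(2)$ hold for some $A\subset V$. I would construct $\mathfrak{G}_A$ as above and argue that condition $(2)$ is equivalent to bipartiteness of $\Gamma(\mathfrak{G}_A)$ with bipartition $(A, V\setminus A)$. The touch-realizability criterion then produces a touch curve $\gamma'$ realizing $\mathfrak{G}_A$, and inverting the Dehn transformation locally at each touch crossing (which is always possible) yields a generic plane curve $\gamma$ with $\mathfrak{G}(\gamma)=\mathfrak{G}$.

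The hardest part will be the precise combinatorial computation of $\Gamma(\mathfrak{G}_A)$ in terms of $\Gamma(\mathfrak{G})$ and $A$. A single Dehn smoothing at a chord $c$ acts on the interlacements among the remaining chords by an operation akin to local complementation on $N(c)$, and performing all such smoothings simultaneously couples them through the common-neighbor structure encoded by $M^{2}$, where $M$ is the $\mathbb{F}_2$-adjacency matrix of $\Gamma(\mathfrak{G})$. I expect the crux of the argument to be an off-diagonal $\mathbb{F}_2$-matrix identity showing that the $(u,v)$-entry of the adjacency matrix of $\Gamma(\mathfrak{G}_A)$ can be written as a sum of terms involving $M_{uv}$, $(M^{2})_{uv}$, and the characteristic function of $A$, so that its vanishing on same-side pairs is exactly the equivalence asserted in condition $(2)$.
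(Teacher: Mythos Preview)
The paper does not supply its own proof of this theorem: the Rosenstiehl criteria are quoted as an external result with attribution to \cite{R76,dFOdM97}, and the exposition immediately moves on to the STZ reformulation. There is therefore nothing in the paper to compare your argument against.

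That said, your outline is in the spirit of the de~Fraysseix--Ossona de Mendez proof you cite, so it is the right strategy for this result. A couple of points where your sketch is loose: first, the parameter $A$ does not index ``two local choices of Dehn smoothing'' in the usual presentation. Dehn's transformation from a generic curve to a touch curve is canonical (determined by the planar embedding), and $A$ arises afterwards from the bipartition of the touch-interlacement graph; in the converse direction one fixes the combinatorial transformation on the diagram first and then looks for a bipartition, which is what encodes $A$. Second, your expectation that the adjacency matrix of $\Gamma(\mathfrak{G}_A)$ is governed by an identity in $M$, $M^2$, and $\mathbf{1}_A$ is correct in spirit, but the actual computation in \cite{dFOdM97} proceeds via the ``trip'' permutation rather than iterated local complementations, and the off-diagonal entries of the transformed adjacency matrix come out as $M_{uv}+(M^2)_{uv}$ modulo~$2$; the role of $A$ enters only through the bipartition, not through the matrix itself. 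If you try to compose $n$ local complementations you will find they do not commute and the bookkeeping becomes unwieldy; the cleaner route is the global description of the transformed interlacement relation.
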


Next, B. Shtylla, L. Traldi and L. Zulli in \cite{STZ09} presented an algebraic re-formulation of the Rosentiehl criteria of realizabity of Gauss diagrams as follows.

\begin{theorem}[The STZ-criteria of Realizability, {\cite[Theorem 2]{STZ09}}]~\\
 Let $\mathfrak{G}$ be Gauss diagram, $\Gamma(\mathfrak{G})$ its inrerlacement graph and $M(\mathfrak{G})$ its interlacement matrix. Then $\mathfrak{G}$ is realizable if and only if there exists a diagonal matrix $D$ such that $M+D$ is idempotent over the field $\mathsf{GF}(2)$.
\end{theorem}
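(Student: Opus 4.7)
The plan is to derive the STZ-criteria directly from the Rosenstiehl criteria stated just above, translating the combinatorial subset $A\subset V$ into the diagonal matrix $D$ by setting $D_{vv}=1$ if $v\in A$ and $D_{vv}=0$ otherwise, and then verifying $(M+D)^2=M+D$ by comparing matrix entries over $\mathsf{GF}(2)$.

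First I would expand $(M+D)^2 = M^2 + MD + DM + D^2$ and record three trivial facts: $D^2=D$, since each diagonal entry of $D$ is $0$ or $1$; $(MD)_{uv}=M_{uv}D_{vv}$ and symmetrically $(DM)_{uv}=D_{uu}M_{uv}$; and $(M^2)_{uv}=\sum_w M_{uw}M_{wv}$ equals the number of common neighbours of $u$ and $v$ modulo $2$, while on the diagonal it equals $\deg(v)\pmod 2$ since $M$ has zero diagonal.

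Next I would compare $(M+D)^2$ and $M+D$ entry by entry. On the diagonal, $M_{vv}=0$ and $(MD+DM)_{vv}=2D_{vv}M_{vv}=0$, so the identity reduces to $\deg(v)+D_{vv}=D_{vv}$, i.e.\ every vertex has even degree --- precisely clause (1) of the Rosenstiehl criteria. Off the diagonal it becomes $(M^2)_{uv}+(D_{uu}+D_{vv})M_{uv}=M_{uv}$, equivalently $(M^2)_{uv}=M_{uv}(1+D_{uu}+D_{vv})$; unpacking the four cases for the pair $(D_{uu},D_{vv})$ shows that this holds for all $u\neq v$ if and only if, for every pair $u,v$, they have an odd number of common neighbours exactly when they are adjacent and either both belong to $A$ or both do not --- which is exactly clause (2) of the Rosenstiehl criteria.

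Hence $(M+D)^2=M+D$ is equivalent to the pair $(\Gamma(\mathfrak{G}),A)$ satisfying the Rosenstiehl criteria, and by the Rosenstiehl theorem this is equivalent to realizability of $\mathfrak{G}$. The two implications share the same calculation: given $A$ define $D$ as above; conversely, given $D$, set $A=\{v:D_{vv}=1\}$. There is no genuine obstacle in the argument; the only care required is the cancellation $(MD+DM)_{vv}=0$ on the diagonal and the correct reading of the parity $1+D_{uu}+D_{vv}$ in the off-diagonal case.
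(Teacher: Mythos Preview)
Your derivation is correct. The expansion $(M+D)^2=M^2+MD+DM+D^2$ together with $D^2=D$, $(MD+DM)_{uv}=(D_{uu}+D_{vv})M_{uv}$, and the interpretation of $(M^2)_{uv}$ as the parity of common neighbours does exactly what you claim: the diagonal entries yield the even-degree condition, and the off-diagonal identity $(M^2)_{uv}=M_{uv}(1+D_{uu}+D_{vv})$ unpacks precisely into Rosenstiehl's biconditional clause~(2) under the dictionary $A=\{v:D_{vv}=1\}$.

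Note, however, that the paper does \emph{not} supply its own proof of this theorem: it is quoted as \cite[Theorem~2]{STZ09} and described in the surrounding text merely as ``an algebraic re-formulation of the Rosenstiehl criteria''. So there is nothing in the paper to compare your argument against. What you have written is essentially the proof given in \cite{STZ09} itself (and implicitly alluded to by the phrase ``algebraic re-formulation''): encode the distinguished set $A$ as a diagonal matrix and read Rosenstiehl's conditions off the entrywise identity $(M+D)^2=M+D$ over $\mathsf{GF}(2)$. Your write-up is a faithful and complete rendering of that argument.
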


\section{Counterexamples to some published descriptions} \label{sec:experimental}

It is claimed in \cite[Theorem 3.11]{GL} that 
 a Gauss diagram $\mathfrak{G}$ is realizable if and only if the following conditions hold:
  \begin{itemize}
    \item[(1)] the number of all chords that cross any two non-intersecting chords and the number of all chords intersecting each chord are both even (including zero),
    \item[(2)] for every chord $\mathfrak{c \in G}$ the Gauss diagram $\widehat{\mathfrak{G}}_\mathfrak{c}$ (that is, Conway's smoothing of the chord $c$) also satisfies the above condition.
  \end{itemize}

 It was also shown (see the proof of Theorem 4.3 in \cite{GL}) that these two conditions can be reformulated in term of adjacency matrix $M$ of the interlacement graph of $\mathfrak{G}$ as follows (the first two conditions are the evenness conditions, and together the 3 conditions form a partial case of the STZ-conditions):
   \begin{enumerate}
      \item $\langle m_i, m_j \rangle \equiv 0(\bmod{2})$, $1 \le i \le n$,
      \item $\langle m_i, m_j \rangle \equiv 0 (\bmod{2})$, if the corresponding chords do not intersect,
      \item $\langle m_i, m_j \rangle + \langle m_i, m_k \rangle + \langle m_j, m_k \rangle \equiv 1 (\bmod{2})$, if the corresponding chords intersect pairwise.
  \end{enumerate}

Another publication \cite{B19}, which quotes an earlier version of \cite{GL}, repeats the same claim, namely, that a Gauss diagram is realizable if and only if its interlacement graph satisfies these three conditions.


We have implemented these conditions in a computer tool \cite{KLV21-lintel} and conducted  extensive computational experiments for verification of these (and other) conditions for realiziability and for enumeration of the classes of Gauss diagrams, see   \cite{KLV21-experimental, KLV21-circle}. We found a family of counterexamples showing that these conditions are necessary but not sufficient for a Gauss diagram to be realizable. See also \cite{KLV22} where we apply deep learning to understand why these counterexamples remained unnoticed for some time.


The three conditions above describe realizability correctly for Gauss diagrams with up to eight chords. There is exactly one Gauss diagram with nine chords, see Fig.\ref{counterexample_to_n=9}, which satisfies the above conditions but is not realizable. There are $6$ Gauss diagrams with ten chords which satisfy the above conditions but are not realizable, see Fig.\ref{fig:counter10}. The results for Gauss diagrams of various sizes are summarised in Table \ref{tab:table1}; it shows how many Gauss diagrams satisfy the STZ-conditions and the above three conditions.

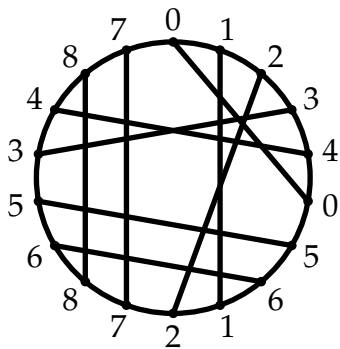
\begin{figure}[h!]
    \centering
     \begin{tikzpicture}[scale=0.6]
        \draw[line width =2] (0,0) circle (3);
     
     {\foreach \angle/ \label in
   { 90/0, 110/7, 130/8, 150/4, 170/3, 190/5, 210/6, 230/8, 250/7, 
     270/2, 290/1, 310/6, 330/5, 350/0, 10/4, 30/3, 50/2, 70/1 }
   {
    \fill(\angle:3.5) node{$\label$};
    \fill(\angle:3) circle (3pt) ;
    }
}

  \draw[line width = 2] (90:3) -- (350:3);
  \draw[line width = 2] (70:3) -- (290:3);
  \draw[line width = 2] (50:3) -- (270:3);
  \draw[line width = 2] (30:3) -- (170:3);
  \draw[line width = 2] (10:3) -- (150:3);
  \draw[line width = 2] (330:3) -- (190:3);
  \draw[line width = 2] (310:3) -- (210:3);
  \draw[line width = 2] (250:3) -- (110:3);
  \draw[line width = 2] (230:3) -- (130:3);

  \end{tikzpicture}
    \caption{This is only one Gauss diagram with nine chords which satisfies above conditions (1)--(3) but is not realizable.}
    \label{counterexample_to_n=9}
\end{figure}

\begin{figure}[h!]
 \begin{tikzpicture}[scale=0.8]
   \begin{scope}[line width = 2, scale=0.7]
\draw (0,0) circle (3);

 \draw[line width = 2] (90:3) -- (324:3);
 \draw[line width = 2] (162:3) -- (360:3);
 
 \draw[line width = 2] (18:3) -- (72:3);
 \draw[line width = 2] (126:3) -- (216:3);
 \draw[line width = 2] (108:3) -- (234:3);
 \draw[line width = 2] (198:3) -- (288:3);
 
 \draw[line width = 2] (144:3) -- (342:3);
 \draw[line width = 2] (180:3) -- (306:3);
 
 \draw[line width = 2] (252:3) -- (54:3);
 \draw[line width = 2] (36:3) -- (270:3);


  \end{scope}
  \begin{scope}[xshift = 6 cm,scale=0.7, line width = 2]
    \draw[line width =2] (0,0) circle (3);

 \draw[line width = 2] (162:3) -- (360:3);
 \draw[line width = 2] (90:3) -- (180:3);
 
 \draw (108:3) -- (234:3);
 \draw (126:3) -- (253:3);
 \draw (216:3) -- (306:3);
 \draw (216:3) -- (306:3);
 \draw (72:3) -- (18:3);
 
 \draw[line width = 2] (288:3) -- (54:3);
 \draw[line width = 2] (36:3) -- (270:3);

 \draw[line width = 2] (144:3) -- (342:3);
 \draw[line width = 2] (324:3) -- (198:3);

  \end{scope}
 
\begin{scope}[xshift = 12cm,scale = 0.7,line width = 2]
    \draw[line width =2] (0,0) circle (3);

 \draw[line width = 2] (324:3) -- (234:3);
 \draw[line width = 2] (72:3) -- (18:3);
 
 \draw (108:3) -- (198:3);
 \draw (162:3) -- (288:3);
 \draw (180:3) -- (306:3);
 \draw (36:3) -- (270:3);
 \draw (72:3) -- (18:3);
 
 \draw[line width = 2] (360:3) -- (126:3);
 \draw[line width = 2] (144:3) -- (342:3);
 
 \draw[line width = 2] (90:3) -- (216:3);
 \draw[line width = 2] (252:3) -- (54:3);

\end{scope}
\begin{scope}[yshift = -6 cm,scale = 0.7,line width = 2]
    \draw[line width =2] (0,0) circle (3);

 \draw[line width = 2] (252:3) -- (162:3);
 \draw[line width = 2] (306:3) -- (180:3);
 
 \draw (90:3) -- (216:3);
 \draw (108:3) -- (234:3);
 \draw (126:3) -- (360:3);
 \draw (36:3) -- (270:3);
 \draw (72:3) -- (18:3);
 
 \draw[line width = 2] (36:3) -- (270:3);
 \draw[line width = 2] (54:3) -- (288:3);
 
 \draw[line width = 2] (144:3) -- (342:3);
 \draw[line width = 2] (198:3) -- (324:3);

  \end{scope}
   \begin{scope}[yshift= - 6cm,xshift = 6cm, scale = 0.7,line width =2]
    \draw[line width =2] (0,0) circle (3);

 \draw[line width = 2] (144:3) -- (18:3);
 \draw[line width = 2] (270:3) -- (360:3);
 \draw[line width = 2] (72:3) -- (342:3);
 \draw[line width = 2] (324:3) -- (198:3);

 \draw (108:3) -- (234:3);
 \draw (126:3) -- (216:3);

 \draw[line width = 2] (252:3) -- (54:3);
 \draw[line width = 2] (288:3) -- (90:3);
 
 \draw[line width = 2] (162:3) -- (36:3);
 \draw[line width = 2] (306:3) -- (180:3);


  \end{scope}
  \begin{scope}[yshift= - 6cm,xshift = 12cm, scale = 0.7,line width =2]
    \draw[line width = 2] (0,0) circle (3);

 \draw[line width = 2] (108:3) -- (234:3);
 \draw[line width = 2] (144:3) -- (54:3);

 \draw (306:3) -- (180:3);
 \draw (198:3) -- (288:3);
 \draw (360:3) -- (270:3);
 
 \draw[line width = 2] (72:3) -- (342:3);
 \draw[line width = 2] (324:3) -- (90:3);
 
 \draw[line width = 2] (126:3) -- (216:3);
 \draw[line width = 2] (162:3) -- (36:3);
\draw[line width = 2] (252:3) -- (18:3);

  \end{scope}
 \end{tikzpicture}
 \caption{All counterexamples for (1) -- (3) conditions for Gauss diagrams with ten chords.}
 \label{fig:counter10}
\end{figure}
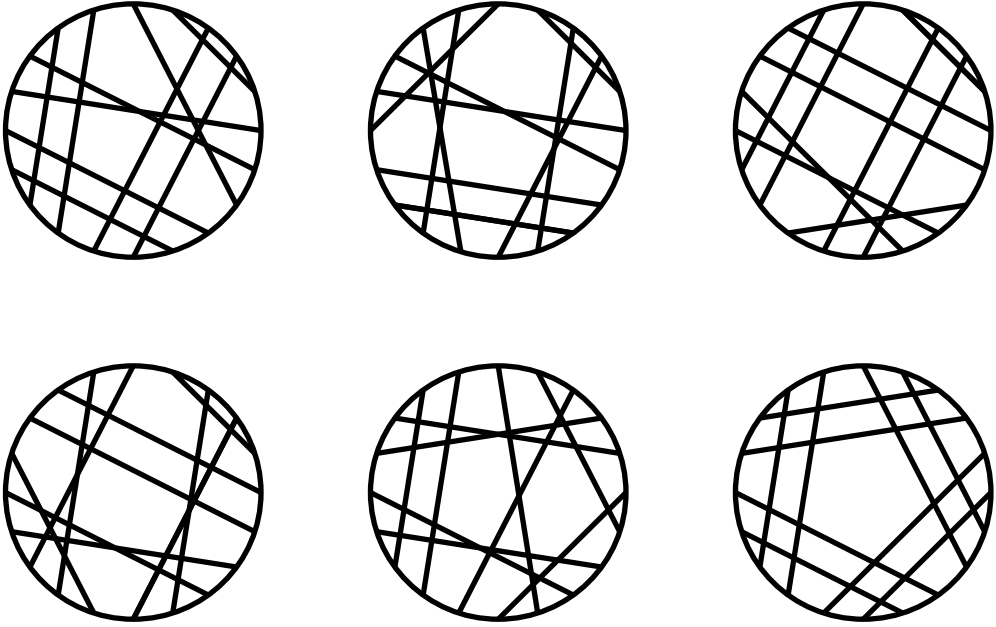

\begin{table}[h]
 \centering
    \begin{tabular}{|c|c|c|c|c|c|c|c|c|c|c|c|} 
   \hline 
   \it{Size of a Gauss diagram}  & 3 & 4 & 5 & 6 & 7 & 8 & 9 & 10 & 11 & 12
     \\
     \hline
      STZ & 1 & 1 & 2 & 3 & 10 & 27 & 101 & 364 & 1610 & 7202
      \\
      (1)--(3) & 1 & 1 & 2 & 3 & 10 & 27 & {\bf 102} & {\bf 370} & {\bf 1646} & {\bf 7437}
      \\
      \hline
    \end{tabular}
    
   \caption{The number of non equivalent Gauss diagrams of sizes  = 3, \ldots, 12, satisfying STZ-conditions and above conditions (1)--(3)}
\label{tab:table1}
\end{table}

\section{Realizability via linear equations in $\mathsf{GF}(2)$} \label{sec:STZ-reformulated}

We show that the realizability of a Gauss diagram is equivalent to the existence of a solution of the corresponding system of linear equations over the field $\mathsf{GF}(2)$.

Let $M$ be the adjacency matrix of a Gauss diagram $\mathfrak{G}$. Being $M$ symmetric we then have $M^2 = (\langle m_i, m_j \rangle)_{1\le i,j \le n}$, over $\mathsf{GF}(2)$, where 
\[
\langle m_i, m_j\rangle: = m_{i,1}m_{j,1} + \cdots + m_{i,n}m_{j,n},
\]
and $m_k:=(m_{k,1}, \ldots, m_{k,n})$ is the $k$th row of the $M$.

Let $D$ be a diagonal $n\times n$ matrix, \textit{i.e.,} $D = \sum_{k \in K} E_{k,k}$, where $K \subseteq \{1,\ldots, n\}$, and $E_{k,k}$ the elementary matrix, that is $E_{k,k} = (e_{i,j})_{1\le i,j \le n}$ where $e_{i,j} = 1$ if and only if $i=j=k$ and $e_{i,j}=0$ otherwise. Since $M$ is symmetric then, if $K \ne \{1,\ldots,n\}$, $DM + MD = \sum_{k \in K} M_k$ where $M_k$ is obtained from $M$ by zeroing all elements of $M$ except elements of the $k$th row and the $k$th column.

Note that if $K = \{1,\ldots, n\}$ then $D$ is the identity $n\times n$ matrix, and the STZ-criteria implies $M^2 = M$, i.e, $M$ is idempotent.

Thus the STZ-criteria can be reformulated as follows

\begin{proposition}
 Let $\mathfrak{G}$ be a Gauss diagram and $M$ the adjacency $n \times n$ matrix of its interlacement graph $\Gamma(\mathfrak{G})$. Then $\mathfrak{G}$ is realizable if and only if one of the following conditions hold:
 \begin{enumerate}
     \item whenever $m_{i,j} \equiv \mathbf{X} (\bmod{2})$ then $\langle m_i, m_j \rangle \equiv \mathbf{X} (\bmod{2})$, $1\le i,j \le n$, $\mathbf{X} = 0,1,$
     \item there is a $K\subsetneq \{ 1,\ldots, n \}$ such that $M + M^2 = \sum_{k\in K}M_k.$
 \end{enumerate}  
\end{proposition}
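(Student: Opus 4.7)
The plan is to derive both reformulations by a direct algebraic manipulation of the STZ-criterion over $\mathsf{GF}(2)$; no new combinatorial input is required beyond what is already recorded in the paragraphs preceding the statement.

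First I would rewrite STZ explicitly. Expanding $(M+D)^2 = M+D$ and using $D^2 = D$ (valid over $\mathsf{GF}(2)$ since $D$ is a $0/1$ diagonal matrix) yields $M^2 + MD + DM = M$, i.e.\
\[
M + M^2 \;=\; MD + DM.
\]
Writing $D = \sum_{k \in K} E_{k,k}$ and checking entries, one sees that $(MD)_{i,j} = m_{i,j}$ if $j\in K$ (else $0$) and $(DM)_{i,j} = m_{i,j}$ if $i\in K$ (else $0$); hence $(MD+DM)_{i,j}$ equals $m_{i,j}$ exactly when one of $i,j$ lies in $K$ and the other does not. The same entry of $\sum_{k\in K} M_k$ receives the contribution $m_{i,j}$ from each $k \in K \cap \{i,j\}$, so it agrees with $(MD+DM)_{i,j}$ modulo $2$ (the pair $i,j\in K$ cancels). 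This identification is exactly the one already flagged in the paragraph before the proposition, and it yields condition (2) as an equivalent form of STZ.

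It remains to dispose of the boundary case $K = \{1,\ldots,n\}$. Here $D = I$, so $MD + DM = 2M = 0$ and STZ collapses to $M^2 = M$; but this is already captured by taking $K = \emptyset$ in (2), so restricting to $K \subsetneq \{1,\ldots,n\}$ loses nothing. For condition (1), I would read $\langle m_i, m_j\rangle$ as the $(i,j)$-entry of $M^2$: the requirement that $\langle m_i, m_j\rangle \equiv m_{i,j}\pmod 2$ for every $i,j$ is the single matrix equation $M^2 = M$, which is the choice $D = 0$ (equivalently $K = \emptyset$) in STZ and therefore a sufficient witness sitting inside the family parametrized by $K$ in~(2).

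The whole proposition is thus a cosmetic rewriting of STZ, and the only genuine calculation is the three-case entry check identifying $MD + DM$ with $\sum_{k\in K} M_k$ over $\mathsf{GF}(2)$; I do not anticipate any real obstacle.
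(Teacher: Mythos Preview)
Your proposal is correct and follows essentially the same route as the paper's proof: both reduce the STZ-criterion $(M+D)^2=M+D$ to $M+M^2=MD+DM$ and identify the right-hand side with $\sum_{k\in K}M_k$, then separate out the degenerate case $D=I$ (equivalently $D=0$) as the idempotency condition~(1). Your write-up simply fills in the entry-wise verification and the boundary case more explicitly than the paper's three-line argument; you also make the minor additional observation that (1) is already covered by $K=\emptyset$ in~(2), which the paper leaves implicit.
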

\begin{proof}
 Indeed, the first condition holds when the matrix $M$ is idempotent, $M^2 = M$ and by the STZ-criteria, $D$ is zero matrix. Next, if $M$ is not idempotent the STZ conditions imply that a diagonal matrix $D$ mast have a form $D = \sum_{k \in K}E_k$ where $K \subsetneq \{1,\ldots, n\}$ and the statement follows.
\end{proof}
 
 We thus can reformulate the STZ-criteria as follows
 
 \begin{proposition}\label{reformSTZ}
  Let $\mathfrak{G}$ be a Gauss diagram, $M = (m_{i,j})_{1\le i,j \le n}$ its adjacency matrix. Then the diagram $\mathfrak{G}$ is realizable if and only if the following system of equations
    \[
   \left\{  m_{i,j} \mathbf{X}_i + m_{i,j}\mathbf{X}_j = \langle m_i, m_j \rangle + m_{i,j}, \quad 1\le i,j \le n  \right.
  \]
 has a solution over a field $\mathsf{GF}(2).$ 
 \end{proposition}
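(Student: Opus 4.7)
The plan is to unfold the STZ criterion entry by entry and recognize the resulting matrix equation as precisely the stated linear system. By the STZ criterion, $\mathfrak{G}$ is realizable if and only if there is a diagonal matrix $D$ over $\mathsf{GF}(2)$ with $(M+D)^2 = M+D$. So I would first expand this identity using the symmetry of $M$ and the fact that, for a diagonal $0/1$-matrix, $D^2 = D$ over $\mathsf{GF}(2)$. This gives
\[
M^2 + MD + DM + D = M + D, \quad \text{i.e.,} \quad M^2 + M = MD + DM.
\]

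Next, I would introduce variables by writing $D = \mathrm{diag}(\mathbf{X}_1,\ldots,\mathbf{X}_n)$ and compute the two sides entry-wise. On one hand, $(MD)_{i,j} = m_{i,j}\mathbf{X}_j$ and $(DM)_{i,j} = \mathbf{X}_i m_{i,j}$, so
\[
(MD+DM)_{i,j} = m_{i,j}\mathbf{X}_i + m_{i,j}\mathbf{X}_j.
\]
On the other hand, using the notation introduced earlier in the section, $(M^2)_{i,j} = \langle m_i, m_j\rangle$, hence
\[
(M^2 + M)_{i,j} = \langle m_i, m_j\rangle + m_{i,j}.
\]
Equating these entry-wise produces exactly the equations displayed in the statement.

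To conclude, I would argue that a diagonal matrix $D$ satisfying $(M+D)^2 = M+D$ exists if and only if the values $\mathbf{X}_1,\ldots,\mathbf{X}_n \in \mathsf{GF}(2)$ solve the displayed linear system, and then invoke the STZ criterion (and its reformulation already proved in the previous proposition) to identify solvability of this system with realizability of $\mathfrak{G}$. The argument is essentially a bookkeeping exercise; the only thing to handle with care is the step $D^2 = D$ over $\mathsf{GF}(2)$ for diagonal $0/1$-matrices, which ensures that the cross terms in $(M+D)^2$ collapse cleanly into the linear form $MD + DM$, and it is this collapse that turns a quadratic matrix condition into a genuinely linear system in the $\mathbf{X}_k$.
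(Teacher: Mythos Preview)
Your proof is correct and follows essentially the same approach as the paper: expand the STZ condition $(M+D)^2 = M+D$, use $D^2=D$ over $\mathsf{GF}(2)$, and read off the entries of $M^2+M = MD+DM$. The paper routes this through the preceding proposition and the auxiliary matrices $M_k$ (with $DM+MD=\sum_k \mathbf{X}_k M_k$), whereas you compute $(MD+DM)_{i,j}$ directly, but the substance is the same.
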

 \begin{proof}
  Let $M$ be an adjacency matrix of a Gauss diagram $\mathfrak{G}$, say $M = (m_{i,j})_{1\le i,j \le n}$. Set $M':=M + M^2$, $M'=(m'_{i,j})_{1 \le i,j \le n}$. We then obtain $m'_{i,j} = m_{i,j} + \langle m_i, m_j \rangle$, for all $1 \le i,j \le n.$ 

Next, since $M$ is symmetric with zero diagonal we then get
 \[
  \sum_{i=1}^n \mathbf{X}_i M_i = \begin{pmatrix}
   0 & (\mathbf{X}_1 + \mathbf{X}_2)m_{1,2} & (\mathbf{X}_1 + \mathbf{X}_3)m_{1,3} & \cdots & (\mathbf{X}_1 + \mathbf{X}_n)m_{1,n}\\
   (\mathbf{X}_1 + \mathbf{X}_2)m_{1,2} & 0 & (\mathbf{X}_2 + \mathbf{X}_3)m_{2,3} & \cdots & (\mathbf{X}_2 + \mathbf{X}_n)m_{2,n} \\
   \vdots & \vdots & \vdots & \ddots & \vdots & \\
   (\mathbf{X}_1 + \mathbf{X}_n)m_{1,n} & (\mathbf{X}_2 + \mathbf{X}_n)m_{2,n} & (\mathbf{X}_3 + \mathbf{X}_n)m_{3,n} & \cdots & 0
  \end{pmatrix}
 \]
 where $\mathbf{X}_1,\ldots, \mathbf{X}_n \in \mathsf{GF}(2).$
 
 Thus, by the STZ-criteria, $M = M^2 + \sum_{k =1}^n \mathbf{X}_k M_k$. Hence we get the following system of equations
 \[
 \Bigl\{ m_{i,j}\mathbf{X}_i + m_{i,j}\mathbf{X}_j = m'_{i,j}, \qquad 1 \le i,j \le n, \Bigr.
 \]
and the statement follows. It is clear that in the case $K = \{1,\ldots, n\}$ the statement holds because STZ-criteria implies that $M^2 = M$, \textit{i.e.,} all $\mathbf{X}_i = 1$. 
 \end{proof}

\begin{remark}
Let us show that STZ-criteria implies the condition (2) of Definition \ref{evennes}.
  Indeed, let $M = (m_{i,j})_{1 \le i,j \le n}$ be its adjacency matrix. By the assumptions there exist at least two $i,j$ such that $m_{i,j} = 0$ and $\langle m_i, m_j \rangle = 1$. Hence the system contains the equations $0=1$ that gives a contradiction. 
\end{remark}

Thus in practice to know whether a Gauss diagram is realizable it is useful firstly to check the evenness conditions are holding. 

\begin{corollary}\label{cor_of_system}
 Let a Gauss diagram $\mathfrak{G}$ satisfy the evenness conditions, $M = (m_{i,j})_{1 \le i,j \le n}$ its adjacency matrix. Let $K \subseteq \{1,\ldots, n\}\times \{1,\ldots, n\}$ be a subset such that whenever $(i,j) \in K$ then $m_{i,j} = 1$. Then $\mathfrak{G}$ is realizable if and only if the following system of equations
 \[
  \mathbf{X}_i + \mathbf{X}_j = \langle m_i, m_j \rangle + 1, \qquad (i,j) \in K
 \]
 has a solution over the field $\mathsf{GF}(2)$. 
\end{corollary}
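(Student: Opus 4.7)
The plan is to deduce this from Proposition~\ref{reformSTZ} by observing that, under the evenness conditions, every equation in the big system with $m_{i,j}=0$ is an automatic identity $0=0$, so the genuine content of the system is carried only by the equations indexed by pairs with $m_{i,j}=1$.

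First, I would case-split the system
\[
m_{i,j}\mathbf{X}_i + m_{i,j}\mathbf{X}_j = \langle m_i, m_j\rangle + m_{i,j}, \qquad 1\le i,j \le n,
\]
from Proposition~\ref{reformSTZ} according to whether $m_{i,j}$ equals $0$ or $1$. If $i=j$, then $m_{i,i}=0$ and the equation reduces to $\langle m_i, m_i\rangle = 0$; computed over $\mathsf{GF}(2)$, one has $\langle m_i, m_i\rangle = \sum_k m_{i,k}^2 = \sum_k m_{i,k}$, which is the parity of the degree of the $i$th vertex of $\Gamma(\mathfrak{G})$, and so vanishes by evenness condition~(1). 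If $i\ne j$ and $m_{i,j}=0$, then the chords indexed by $i$ and $j$ do not intersect, and $\langle m_i, m_j\rangle$ is the number of their common neighbors modulo $2$, which vanishes by evenness condition~(2). Hence each equation with $m_{i,j}=0$ is automatically satisfied and may be discarded from the system without altering its solvability.

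Next, for each pair with $m_{i,j}=1$ the equation simplifies to
\[
\mathbf{X}_i + \mathbf{X}_j = \langle m_i, m_j\rangle + 1,
\]
and the equation for $(j,i)$ coincides with that for $(i,j)$ by symmetry of $M$. Thus the full system of Proposition~\ref{reformSTZ} is equivalent, as a system over $\mathsf{GF}(2)$, to the reduced system listed in the corollary indexed by $K$ (where $K$ is understood to enumerate each edge of $\Gamma(\mathfrak{G})$ exactly once, possibly allowing both orderings). The claimed equivalence with realizability then transfers directly from Proposition~\ref{reformSTZ}.

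The only delicate point is the reading of the hypothesis on $K$: for the biconditional to hold, $K$ must contain at least one ordered pair $(i,j)$ for every edge of $\Gamma(\mathfrak{G})$, otherwise one obtains only a necessary condition. Once that caveat is in place, no further obstacle remains, and the proof is purely a bookkeeping reduction of the STZ criterion via the two parity identities provided by the evenness conditions.
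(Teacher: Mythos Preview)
Your proof is correct and follows the same approach as the paper's, which simply observes that under the evenness conditions every equation with $m_{i,j}=0$ becomes $0=0$ and invokes Proposition~\ref{reformSTZ}. You supply more detail (handling the diagonal case $i=j$ separately, noting the symmetry in $i,j$) and helpfully flag the implicit assumption that $K$ should cover every edge of $\Gamma(\mathfrak{G})$, which the paper's statement leaves ambiguous.
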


\begin{proof}
 Indeed, if $\mathfrak{G}$ satisfies the evenness conditions then whenever $m_{i,j} =0$ we have $\langle m_i, m_j \rangle = 0$ and by Proposition \ref{reformSTZ} the statement follows.
\end{proof}

\begin{example}\label{razd.trilistnik}
  Let us show that the Gauss diagram in Fig.\ref{fig_for_STZ1_not_realizable} is not realizable.
  
\begin{figure}[h!]
     \begin{tikzpicture}[scale=0.6]
      \draw[line width =2] (0,0) circle (3);
      {\foreach \angle/ \label in
       { 90/1, 120/2, 150/3, 180/4, 210/5, 240/1, 270/6, 300/3, 330/2, 
        0/5, 30/4, 60/6 
        }
     {
        \fill(\angle:3.5) node{$\label$};
        \fill(\angle:3) circle (3pt) ;
      }
    }
  \draw[line width = 2] (90:3) -- (240:3);
  \draw[line width = 2] (120:3) -- (330:3);
  \draw[line width = 2] (150:3) -- (300:3);
  \draw[line width = 2] (180:3) -- (30:3);
  \draw[line width = 2] (210:3) -- (0:3);
  \draw[line width = 2] (270:3) -- (60:3);
     \end{tikzpicture}
\end{figure}

Its adjacency matrix is

\[
 M = \bordermatrix{
    & 1 & 2 & 3 & 4 & 5 & 6 \cr
   1 & 0 & 0 & 1 & 1& 1& 1 \cr
   2 & 0 & 0 & 1 & 1& 1& 0 \cr
   3 & 1 & 1 & 0 & 1 & 1& 1 \cr
   4& 1 & 1 & 0 & 0 & 1& 1 \cr
   5& 1 & 1 & 1 & 1 & 0& 0 \cr
    6& 1 & 1 & 1 & 1 & 0& 0 \cr
 }
\]

We get $K=\{(1,3), (1,4), (1,5), (1,6), (2,3), (2,4), (2,5), (2,6), (3,5), (3,6), (4,5), (4,6)\}$. Hence, the corresponding system of equations has the following equations
\begin{align*}
    & \mathbf{X}_1 + \mathbf{X}_3 = 1,\\
    & \mathbf{X}_1 + \mathbf{X}_5 = 1, \\
    & \mathbf{X}_3 + \mathbf{X}_5 = 1,
\end{align*}
which show that the system has no a solution, thus the diagram is not realizable.
\begin{flushright}
 $\square$
\end{flushright}
\end{example}

\begin{example}\label{STZ_example}
  Let us consider the following Gauss diagram and show that it is realizable. 
 \begin{figure}[h!]
     \centering
     \begin{tikzpicture}[scale = 0.5]
      \draw[line width =2] (0,0) circle (3);
      {\foreach \angle/ \label in
       { 90/4, 120/3, 150/2, 180/1, 210/5, 240/6, 270/3, 300/4, 330/6, 
        0/2, 30/1, 60/5 
        }
     {
        \fill(\angle:3.5) node{$\label$};
        \fill(\angle:3) circle (3pt) ;
      }
    }
  \draw[line width = 2] (90:3) -- (300:3);
  \draw[line width = 2] (120:3) -- (270:3);
  \draw[line width = 2] (150:3) -- (0:3);
  \draw[line width = 2] (180:3) -- (30:3);
  \draw[line width = 2] (210:3) -- (60:3);
  \draw[line width = 2] (240:3) -- (330:3);
     \end{tikzpicture}
 \end{figure}
 
 We have
 \[
 M = \bordermatrix{
    & 1 & 2 & 3 & 4 & 5 & 6 \cr
   1 & 0 & 1 & 1 & 1& 1& 0 \cr
   2 & 1 & 0 & 1 & 1& 1& 0 \cr
   3 & 1 & 1 & 0 & 0 & 1& 1 \cr
   4& 1 & 1 & 0 & 0 & 1& 1 \cr
   5& 1 & 1 & 1 & 1 & 0& 0 \cr
    6& 0 & 0 & 1 & 1 & 0& 0 \cr
 }, \qquad
  M^2 = \begin{pmatrix}
    0 & 1 & 0 & 0 & 1 & 0 \\
    1 & 0 & 0 & 0 & 1 & 0 \\
    0 & 0 & 0 & 0 & 0 & 0 \\
    0 & 0 & 0 & 0 & 0 & 0 \\
    1 & 1 & 0 & 0 & 0 & 0 \\
    0 & 0 & 0 & 0 & 0 &  0
 \end{pmatrix}
\]

We thus have the following system of equations
 \[
  \begin{cases}
   \mathbf{X}_1 + \mathbf{X}_2  = 0 \\
   \mathbf{X}_1 + \mathbf{X}_3  = 1 \\
   \mathbf{X}_1 + \mathbf{X}_4  = 1 \\
   \mathbf{X}_1 + \mathbf{X}_5  = 0 \\
   \mathbf{X}_2 + \mathbf{X}_3  = 1 \\
   \mathbf{X}_2 + \mathbf{X}_4  = 1 \\
   \mathbf{X}_2 + \mathbf{X}_5  = 0 \\
   \mathbf{X}_3 + \mathbf{X}_5  = 1 \\
   \mathbf{X}_3 + \mathbf{X}_6  = 1 \\
   \mathbf{X}_4 + \mathbf{X}_5  = 1 \\
   \mathbf{X}_4 + \mathbf{X}_6  = 1
  \end{cases}
 \]

It follows that $\mathbf{X}_1 = c, \mathbf{X}_2 = c, \mathbf{X}_3 = 1+c, \mathbf{X}_4 = 1+ c, \mathbf{X}_5 = c, \mathbf{X}_6 = c$, where $c \in \mathsf{GF}(2)$ \textit{i.e.,} the system has a solution and hence the diagram is realizable.
\begin{flushright}
 $\square$
\end{flushright}
\end{example}

 For a given Gauss diagram $\mathfrak{G}$ and its interlacement graph $\Gamma(\mathfrak{G}) = (V,E)$, where $V$ is a set of vertices and $E$ is a set of edges, we consider a weighted graph ${\Gamma_\omega(\mathfrak{G}}):=(V,E,\omega)$, where the edge weight function $\omega: E \to \mathsf{GF}(2)$ is defined as follows 
 \[
 \omega((i,j)) := \langle m_i, m_j \rangle,
\] 
where $m_i, m_j$ are the $i$th and the $j$th rows of the adjacency matrix $M$ of $\Gamma(\mathfrak{G})$.

\begin{corollary}\label{cor_for_graph}
A Gauss diagram $\mathfrak{G}$ is realizable if and only if its interlacemnt graph $\Gamma(\mathfrak{G})$ is euler and for any cycle $C = (c_1,\ldots, c_\ell)$ of the $\Gamma(\mathfrak{G})$, we have 
 \[
 \sum_{i=1}^\ell \omega(c_i) \equiv \ell \bmod{2}.
 \]
\end{corollary}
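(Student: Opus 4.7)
The plan is to derive this corollary from Corollary \ref{cor_of_system} by reinterpreting the resulting linear system over $\mathsf{GF}(2)$ in graph-theoretic language. First I would unpack the hypotheses: saying that $\Gamma(\mathfrak{G})$ is eulerian encodes condition (1) of the evenness conditions (every vertex has even degree). Condition (2) of the evenness conditions is, by the remark after Proposition \ref{reformSTZ}, a necessary consequence of realizability, so it is automatic in the forward direction; in the converse direction it should either be read as implicit in the "eulerian" hypothesis or recovered by testing the cycle condition on degenerate length-$2$ walks $(i,j,i)$ across non-edges.

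With both evenness conditions in force, Corollary \ref{cor_of_system} reduces realizability to the solvability over $\mathsf{GF}(2)$ of the edge-indexed system
\[
\mathbf{X}_i + \mathbf{X}_j \; = \; \omega((i,j)) + 1, \qquad (i,j) \in E(\Gamma(\mathfrak{G})).
\]
The key tool is the classical fact that an edge-labelled system of this shape over $\mathsf{GF}(2)$ is consistent if and only if, for every cycle $(c_1,\ldots,c_\ell)$ of the underlying graph, the sum of the right-hand sides along that cycle vanishes modulo $2$. One direction is immediate, since in any cycle sum each variable $\mathbf{X}_v$ appears in exactly two consecutive equations and cancels; for the other direction, one fixes values arbitrarily on a spanning tree and extends along non-tree edges, the cycle condition guaranteeing well-definedness. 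It is enough to check the condition on a cycle basis, which by linearity is equivalent to checking it on all cycles.

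Plugging $b_{ij} = \omega((i,j)) + 1$ into the cycle sum along $(c_1,\ldots,c_\ell)$ yields
\[
\sum_{i=1}^\ell (\omega(c_i)+1) \; = \; \sum_{i=1}^\ell \omega(c_i) + \ell,
\]
so vanishing modulo $2$ is exactly the stated congruence $\sum_{i=1}^\ell \omega(c_i) \equiv \ell \pmod{2}$. This gives the desired equivalence.

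The main obstacle I anticipate is the bookkeeping around evenness condition (2): Corollary \ref{cor_of_system} explicitly assumes it, whereas the present statement packages everything into "eulerian" plus a cycle condition on edges of $\Gamma(\mathfrak{G})$. Everything else is essentially a standard cocycle argument over $\mathsf{GF}(2)$, so the substantive content really is the translation from linear algebra to cycles in the interlacement graph.
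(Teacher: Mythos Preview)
Your approach is essentially the same as the paper's: both invoke Corollary~\ref{cor_of_system} and observe that summing the equations $\mathbf{X}_i+\mathbf{X}_j=\omega((i,j))+1$ around a cycle yields the congruence $\sum\omega(c_i)\equiv\ell\pmod 2$. Your version is in fact more complete than the paper's, which only writes down the cycle equations and asserts that ``the statement thus follows''; you supply the spanning-tree extension for the converse direction and you correctly flag the bookkeeping issue with evenness condition~(2), which the paper's terse proof leaves implicit.
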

\begin{proof}
 Indeed, by Corollary \ref{cor_of_system}, 
 \[
  \begin{cases}
   \mathbf{X}_{i_1} + \mathbf{X}_{i_2} \equiv \omega((i_1,i_2)) + 1, \\
   \phantom{\mathbf{X}_{i_1}\,}\vdots \phantom{+\mathbf{X}_{i_2}} \ddots \phantom{\omega((i_1,i_2))} \vdots \\
   \mathbf{X}_{i_\ell} + \mathbf{X}_{i_1} \equiv \omega((i_\ell, i_1)) + 1,
  \end{cases} 
 \]
 where we have put $c_1 = (i_1,i_2), \ldots, c_\ell = (i_\ell, i_1)$, and the statement thus follows.
\end{proof}

Let us turn the previous examples and apply the previous criteria.

\begin{example}
Let us consider the Gauss diagram $\mathfrak{G}$ from Example \ref{razd.trilistnik} (see Fig.\ref{fig_for_STZ1_not_realizable}). It is easy to see that $M^2 = 0$ for its adjacency matrix $M$. Next, consider its weighted graph ${\Gamma_\omega(\mathfrak{G})}$; all its edges have thus weight equal to $0$. We see that this graph does not satisfy the conditions of Corollary \ref{cor_for_graph}. Indeed, cycle $(1,2,5)$ has length $3$ but $\omega((1,2)) + \omega((2,5)) + \omega((5,1)) \equiv 0 (\bmod{2})$. Hence the diagram is not realizable.

 \begin{figure}[h!]
     \centering
     \begin{tikzpicture}[scale = 0.5]
      \draw[line width =2] (0,0) circle (3);
      {\foreach \angle/ \label in
       { 90/1, 120/2, 150/3, 180/4, 210/5, 240/1, 270/6, 300/3, 330/2, 
        0/5, 30/4, 60/6 
        }
     {
        \fill(\angle:3.5) node{$\label$};
        \fill(\angle:3) circle (3pt) ;
      }
    }
  \draw[line width = 2] (90:3) -- (240:3);
  \draw[line width = 2] (120:3) -- (330:3);
  \draw[line width = 2] (150:3) -- (300:3);
  \draw[line width = 2] (180:3) -- (30:3);
  \draw[line width = 2] (210:3) -- (0:3);
  \draw[line width = 2] (270:3) -- (60:3);
  
  \begin{scope}[xshift = 10cm]
     {\foreach \angle/ \label in
       { 90/1, 150/6,  210/5, 270/4,  330/3, 30/2 
        }
     {
        \fill(\angle:3.5) node{$\label$};
        \fill(\angle:3) circle (5pt) ;
      }
    }
     
  \draw (90:3) -- (30:3);     
  \draw (90:3) -- (330:3);
  \draw (90:3) -- (270:3);
  \draw (90:3) -- (210:3);

  \draw (30:3) -- (270:3);
  \draw (30:3) -- (210:3);
  \draw (30:3) -- (150:3);
  
  \draw (330:3) -- (270:3);
  \draw (330:3) -- (210:3);
  \draw (330:3) -- (150:3);
  
  \draw (270:3) -- (150:3);
  
  \draw (210:3) -- (150:3);
  \end{scope}
     \end{tikzpicture}
     \caption{}\label{fig_for_STZ1_not_realizable}
 \end{figure}
\end{example}

\begin{example}
Let us consider the Gauss diagram from Example \ref{STZ_example} (see Fig.\ref{fig_for_STZ1}). Knowing its adjacency matrix $M$ and $M^2$ we obtain
\[
 \omega(e)  = \begin{cases}
  1, & \mbox{if $e \in \{(1,2), (1,5), (2,5)\}$},\\
  0, & \mbox{in otherwise.}
 \end{cases}
\]

By the straightforward verification we see that the graph $\Gamma(\mathfrak{G})$ satisfies the conditions of Corollary \ref{cor_for_graph} and thus the diagram is realizable.
 \begin{figure}[h!]
     \centering
     \begin{tikzpicture}[scale = 0.5]
      \draw[line width =2] (0,0) circle (3);
      {\foreach \angle/ \label in
       { 90/4, 120/3, 150/2, 180/1, 210/5, 240/6, 270/3, 300/4, 330/6, 
        0/2, 30/1, 60/5 
        }
     {
        \fill(\angle:3.5) node{$\label$};
        \fill(\angle:3) circle (3pt) ;
      }
    }
  \draw[line width = 2] (90:3) -- (300:3);
  \draw[line width = 2] (120:3) -- (270:3);
  \draw[line width = 2] (150:3) -- (0:3);
  \draw[line width = 2] (180:3) -- (30:3);
  \draw[line width = 2] (210:3) -- (60:3);
  \draw[line width = 2] (240:3) -- (330:3);
  
  \begin{scope}[xshift = 10cm]
     {\foreach \angle/ \label in
       { 90/1, 150/6,  210/5, 270/4,  330/3, 30/2 
        }
     {
        \fill(\angle:3.5) node{$\label$};
        \fill(\angle:3) circle (5pt) ;
      }
    }
     
  \draw[line width =2] (90:3) -- (30:3);     
  \draw (90:3) -- (210:3);
  \draw (90:3) -- (330:3);
  \draw (90:3) -- (270:3);
  \draw[line width =2] (90:3) -- (210:3);
  
  \draw (30:3) -- (330:3);
  \draw (30:3) -- (270:3);
  \draw[line width =2] (30:3) -- (210:3);
  
  \draw (330:3) -- (210:3);
  \draw (330:3) -- (150:3);
  
  \draw (270:3) -- (210:3);
  \draw (270:3) -- (150:3);
    \end{scope}
     \end{tikzpicture}
     \caption{The Gauss diagram $\mathfrak{G}$ and its weighted graph $\Gamma_\omega(\mathfrak{G})$; the thick edges have weight equal to $1$ and the other have weight equal to 0. We see that this graph satisfies the conditions of Corollary \ref{cor_for_graph} and thus the diagram is realizable}
     \label{fig_for_STZ1}
 \end{figure}
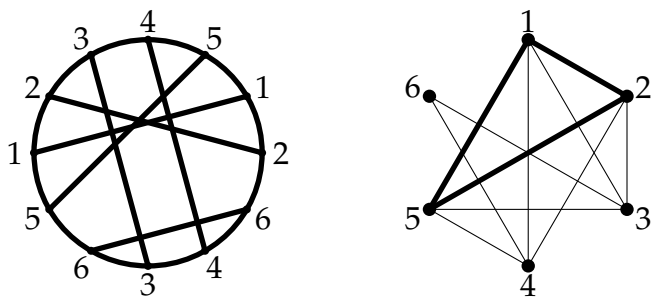
\end{example}

\section{Touch realization of Gauss diagrams and Dehn's Criteria}

In this section we consider the problem of touch realization of Gauss diagrams. The results in this section are not, strictly speaking, new, because they form a part of some descriptions of realizable Gauss diagrams; however, they are never presented on their own and explicitly. We essentially follow the definitions and concepts of \cite{dFOdM97}. The results in this section are also very similar to \cite[Lemma 3]{RT84}. 

A \textit{parametrized curve} $\zeta$ is a continuous mapping $\zeta:[0,1] \to \mathbb{R}^2$ such that $\zeta(0) = \zeta(1)$ and for which the underlying curve $\zeta([0,1])$ is piecewise smooth and has a finite number of multiple points, all of which have multiplicity two. Let $\mathrm{P}(\zeta)$ denote the set of the points of multiplicity two. To any point $p\in \mathrm{P}(\zeta)$, we associate the two parameter values $t_p',t_p''\in [0,1]$ such that $t_p'< t_p''$ and $\zeta(t_p') = \zeta(t_p'') = p$. A point $p\in \mathrm{P}(\zeta)$ is a \textit{touching point} if any local deformation of $\zeta$ in a neighborhood of $t_p'$ does not preserve the existence of a double point. A \textit{touch curve} is a parametrized curve with only touching points.

For a given touch curve $\zeta$ we can similarly associate a Gauss code; walking on a path along the $\zeta$ until returning back to the origin and then generate a double occurrence word of the curve $\zeta$. Putting the labels of the crossings on a circle in the order of the word and joining by a chord all pairs of identical labels we then obtain a chord diagram (=Gauss diagram) of a touch curve $\mathfrak{G}(\zeta)$ (see Fig.\ref{Touch_Gauss})

 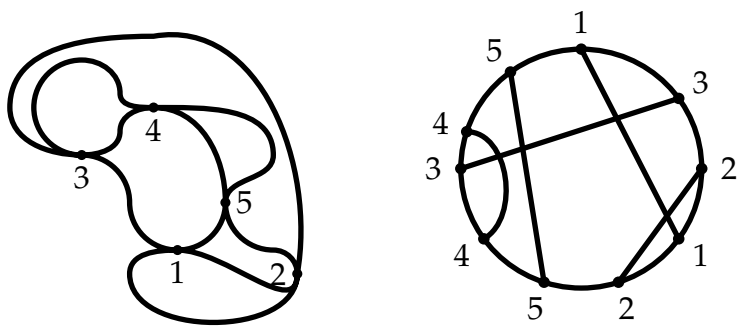
\begin{figure}[h!]
    \centering
    \begin{tikzpicture}[scale=0.9]
    \begin{scope}[scale=0.7]
     \draw[line width =2] (1, 1) to [out = 0, in = 90] (2,0) to [out = 270, in = 180] (3,-1) to [out = 0, in = 270] (4,0) to [out = 90, in = 0] (2.5, 2) to [out = 180, in = 90] (1.8,1.5) to [out = 270, in = 0] (1,1) to [out = 180, in = 270] (0,2) to [out = 90, in = 180] (1,3) to [out = 0, in = 90] (1.8, 2.3) to [out = 270, in = 180] (2.5,2) to [out = 0, in = 90] (5, 1) to  [out = 270, in =90] (4,0) to [out = 270, in = 180] (5,-1) to [out = 0, in= 90] (5.5,-1.5) to [out=270, in = 0] (3,-1) to [out = 180, in = 90] (2,-1.5) to [out = 270, in = 270] (5.5, -1.5) to [out = 80, in = 10] (2.5,3.5) to [out = 180, in = 90] (-0.5,2) [out = 270, in = 180] to (1,1);

  \fill(1,1) node[below]{$3$};
  \fill (1,1) circle (3pt);

  \fill(3,-1) node[below]{$1$};
  \fill (3,-1) circle (3pt);

  \fill(4,0) node[right]{$5$};
  \fill (4,0) circle (3pt);

  \fill(2.5,2) node[below]{$4$};
  \fill (2.5,2) circle (3pt);

  \fill(5.5,-1.5) node[left]{$2$};
  \fill(5.5,-1.5) circle (3pt);      
    \end{scope}


  \begin{scope}[xshift = 8cm, yshift = 0.5cm, scale =0.8,line width = 2]
     \draw[line width =2] (0,0) circle (2.2);
     
     {\foreach \angle/ \label in
   { 90/1, 126/5, 162/4,  180/3, 216/4, 252/5, 288/2, 324/1, 0/2, 36/3}
   {
    \fill(\angle:2.7) node{$\label$};
    \fill(\angle:2.2) circle (3pt) ;
    }
}
     \draw (90:2.2) -- (324:2.2);
     \draw (126:2.2) -- (252:2.2);
     \draw (162:2.2) to [out = 0, in = 30] (216:2.2);
     \draw (180:2.2) -- (36:2.2);
     \draw (288:2.2) -- (0:2.2);
\end{scope}
\end{tikzpicture}
\caption{A touch curve and its Gauss diagram}
    \label{touch_curve_and_diagram}
\end{figure}

Touch realization of Gauss diagrams has been employed in some steps of some algorithms for checking realizability of Gauss diagrams, including the first description of realizability, in which M. Dehn calls them ``Baum--Zwiebel Figur'' (``tree-and-onion diagram'') \cite{D} (see Fig.\ref{touch_and_graph}) and \cite{RT84}.

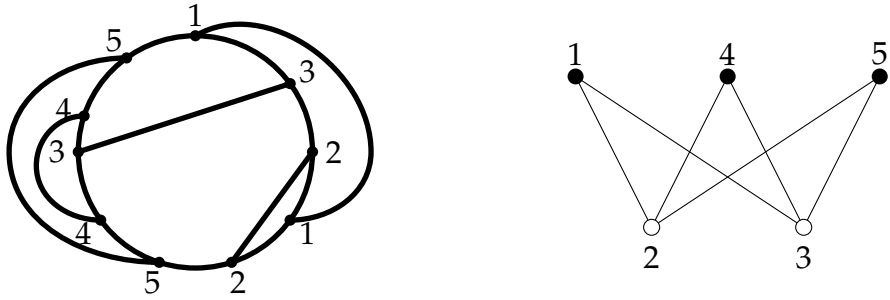
\begin{figure}
    \centering
     \begin{tikzpicture}
    \begin{scope}[xshift = 0, yshift = 0cm, scale =0.7,line width=2]
     \draw[line width =2] (0,0) circle (2.2);
     
     {\foreach \angle/ \label in
   { 90/1, 126/5, 162/4,  180/3, 216/4, 252/5, 288/2, 324/1, 0/2, 36/3}
   {
    \fill(\angle:2.6) node{$\label$};
    \fill(\angle:2.2) circle (3pt) ;
    }
}
     \draw (90:2.2) to [out = 30, in = 90] (0:3.3) to [out=270, in = 0] (324:2.2);
     \draw (126:2.2) to [out = 180, in = 90] (180:3.5) to [out = 270, in = 180] (252:2.2);
     \draw (162:2.2) to [out = 180, in = 90] (185: 3) to [out = 270, in =180]  (216:2.2);
     \draw (180:2.2) -- (36:2.2);
     \draw (288:2.2) -- (0:2.2);
\end{scope}
\begin{scope}[xshift = 5cm]
 
    \draw (0,1) -- (1,-1);
    \draw (0,1) -- (3,-1);

    \draw (2,1) -- (1,-1);
    \draw (2,1) -- (3,-1);

    \draw (4,1) -- (1,-1);
    \draw (4,1) -- (3,-1);
   
    \fill (0,1.3) node {$1$};
    \fill  (0,1) circle (3pt);

    \fill (2,1.3) node {$4$};
    \fill  (2,1) circle (3pt);

    \fill (4,1.3) node {$5$};
    \fill  (4,1) circle (3pt);

    \fill[white] (1,-1) circle (3pt);

    \fill (1,-1.1) node[below] {$2$};
    \draw  (1,-1) circle (3pt);

   \fill[white] (3,-1) circle (3pt);

   \fill (3,-1.1) node[below] {$3$};
    \draw  (3,-1) circle (3pt);
\end{scope}
     \end{tikzpicture}
    \caption{The previous touch realizable Gauss diagram can be also shown as follows; we see that a possibility to laying out some chords inside and outside exactly correspondences to its interlacement graph are to be bipartite.}
    \label{touch_and_graph}
\end{figure}

\begin{theorem}{\cite[\S 3]{D}}
  A Gauss code is realizable if and only if it satisfies both the following conditions:
  \begin{enumerate}
      \item (Gauss' parity condition) any matching pair of symbols must be separated by an even number of other symbols,
      \item (Dehn's untangling condition) after reversing every substring bounded by matching symbols the corresponding interlacement graph must be bipartite. 
  \end{enumerate}
\end{theorem}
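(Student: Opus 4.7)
The plan is to exploit the reduction (mentioned in the overview section) of the general realizability problem to the touch-realization problem, together with the characterization of touch-realizability in terms of bipartiteness of the interlacement graph, which the paper has already recalled. Concretely, I would interpret the substring-reversal operation as the Gauss-code-level incarnation of a local geometric move that converts every transverse crossing of $\gamma$ into a tangent touching point, and then invoke the touch-realizability theorem on the transformed code.

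For the necessity of Gauss' parity condition, I would run the standard Jordan-curve argument: given a realizing curve $\gamma$ and a crossing $p$ with chord $a$, the arc of $\gamma$ between the two visits to $p$, together with a short path through $p$, forms a closed subloop that splits the plane into two components. Any other crossing $q$ is visited twice by $\gamma$, and its label appears in the substring between the two copies of $a$ either $0$ or $2$ times (when both visits lie in the same component) or $1$ time (when the visits lie on opposite sides). The second case is exactly when chord $b$ crosses chord $a$, so the length of the substring has the same parity as the degree of $a$ in the interlacement graph, and realizability forces this parity to be even.

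For the Dehn untangling condition, the key geometric step is the local move at a crossing $p$: replace the transverse intersection by two tangent arcs meeting at $p$, turning the crossing into a touching point without changing the set of multiple points. I would verify by direct local inspection that performing this move at $p$ changes the Gauss code by reversing the substring delimited by the two copies of the label of $p$; since reversals associated with disjoint substrings commute, and nested ones can be checked case by case, performing the move at every crossing produces exactly the transformed code of the statement, underlain by a genuine touch curve. Conversely, a touch-curve realization of the transformed code can be perturbed back to a transverse realization of the original one, since each touching point admits two local transverse deformations and either choice is permitted. Combining this with the fact that touch-realizability of a Gauss diagram is equivalent to bipartiteness of its interlacement graph gives the equivalence claimed in Dehn's theorem.

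The hard part will be making the reversal-versus-smoothing correspondence rigorous: one must show that the local crossing-to-touching move is compatible with the combinatorial substring-reversal, and that iterating the move at every crossing yields a globally well-defined touch curve rather than merely a well-defined combinatorial code. I would handle this by induction on the number of crossings, smoothing them one at a time and tracking how each local change modifies the cyclic reading order of labels in the substring between the two visits to the resolved crossing; the parity condition from the first step is exactly what is needed to guarantee that the inductive step never produces a contradictory reading, so the two conditions of the theorem mesh together rather than being independent.
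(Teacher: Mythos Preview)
The paper does not actually prove this theorem: it is quoted from Dehn's original work \cite{D} and stated without proof, followed only by an illustrative example. Hence there is no ``paper's own proof'' to compare your proposal against. Your strategy --- reduce crossing-realizability to touch-realizability via a local smoothing move, then invoke Proposition~\ref{prop:planar-is-bipartite} --- is exactly the narrative the paper sketches around this citation, so in spirit you are on the intended track.

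That said, your outline has two genuine gaps. First, your commutativity argument for the reversals covers only the disjoint and nested cases, whereas the essential case is that of \emph{interlaced} chords, i.e.\ overlapping substrings. There the reversals do \emph{not} commute at the level of Gauss codes: for instance, starting from the trefoil code $abcabc$ and reversing in the order $a,b,c$ yields $acbabc$, while the order $b,a,c$ yields $abacbc$, and these are inequivalent cyclic words with different interlacement graphs. What remains true (and is what you actually need) is that bipartiteness of the resulting interlacement graph is independent of the order, but that requires a separate argument you have not supplied. Correspondingly, the geometric claim that ``performing the move at every crossing produces exactly the transformed code of the statement'' is not well-posed until you fix an order or prove order-independence of the relevant property.

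Second, your converse step is not quite right. Perturbing a touching point into a transverse crossing does \emph{not} change the double-occurrence word: both visits occur at the same parameter values in the same cyclic order, so you would obtain a transverse realization of the \emph{transformed} code $w'$, not of the original $w$. To recover $w$ you must perform the inverse of the smoothing --- the local reconnection that swaps which pair of arc-ends are joined --- and check that this can be done consistently at every former crossing so that the resulting curve is still a single closed curve with code $w$. This is where the parity condition genuinely enters on the converse side; your remark that parity is ``exactly what is needed to guarantee that the inductive step never produces a contradictory reading'' points in the right direction but does not yet pin down the mechanism.
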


It is clear that Gauss' parity condition is a partial case of the evenness condition (see Definition \ref{evennes}), that is every chord is crossed by an even (possibly zero) number of chords.

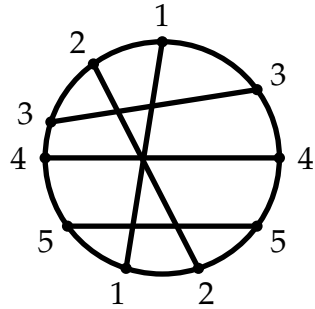
\begin{figure}
    \centering
      \begin{tikzpicture}[scale = 0.7, line width=2]
    \draw[line width =2] (0,0) circle (2.2);
     
     {\foreach \angle/ \label in
   { 90/1, 126/2, 162/3,  180/4, 216/5, 252/1, 288/2, 324/5, 0/4, 36/3}
   {
    \fill(\angle:2.7) node{$\label$};
    \fill(\angle:2.2) circle (3pt) ;
    }
}
     
     \draw (90:2.2) -- (252:2.2);
     \draw (126:2.2) -- (288:2.2);
     \draw (162:2.2) -- (36:2.2);
     \draw (180:2.2) -- (0:2.2);
     \draw (324:2.2) -- (216:2.2);
      \end{tikzpicture}
    \caption{A Gauss diagram with a Gauss code $\mathbf{1234512543}$}\label{Gd+code}
    \label{Touch_Gauss}
\end{figure}

\begin{example}
Let us consider the following Gauss diagram (see Fig.\ref{Gd+code}). We have the following code $\mathbf{1234512543}$. It is clear that Gauss' parity condition holds. Reversing every substring bounded by matching symbols we obtain
\[
   \underbracket{\textbf{123451}}\textbf{2543} \to \textbf{1543}\underbracket{\textbf{212}}\textbf{543} \to \textbf{154}\underbracket{\textbf{3212543}} \to \textbf{15}{{434}}\textbf{52123}
\]
and
\[
\textbf{15}\underbracket{{434}}\textbf{52123} \to \textbf{1}\underbracket{\textbf{54345}}\textbf{2123} \to \textbf{1543452123}.
\]
 
The corresponding Gauss diagram and its interlacement graph is shown in Fig.\ref{G+C+Gr} We see that its interlacement graph is bipartite, hence the Gauss diagram in Fig.\ref{Gd+code} is realizable.

\begin{figure}[h!]
    \centering
      \begin{tikzpicture}
    \begin{scope}[xshift = 0cm, yshift = 0.5cm, scale =0.8,line width=2]
     \draw[line width =2] (0,0) circle (2.2);
     {\foreach \angle/ \label in
   { 90/1, 126/5, 162/4,  180/3, 216/4, 252/5, 288/2, 324/1, 0/2, 36/3}
   {
    \fill(\angle:2.5) node{$\label$};
    \fill(\angle:2.2) circle (3pt) ;
    }
}
     \draw (90:2.2) -- (324:2.2);
     \draw (126:2.2) -- (252:2.2);
     \draw (162:2.2) to [out = 0, in = 30] (216:2.2);
     \draw (180:2.2) -- (36:2.2);
     \draw (288:2.2) -- (0:2.2);
\end{scope}

\begin{scope}[xshift = 4cm, yshift = 0.5cm]
 
    \draw (0,1) -- (1,-1);
    \draw (0,1) -- (3,-1);

    \draw (2,1) -- (1,-1);
    \draw (2,1) -- (3,-1);

    \draw (4,1) -- (1,-1);
    \draw (4,1) -- (3,-1);
   
    \fill (0,1.3) node {$1$};
    \fill  (0,1) circle (3pt);

    \fill (2,1.3) node {$4$};
    \fill  (2,1) circle (3pt);

    \fill (4,1.3) node {$5$};
    \fill  (4,1) circle (3pt);

    \fill[white] (1,-1) circle (3pt);

    \fill (1,-1.1) node[below] {$2$};
    \draw  (1,-1) circle (3pt);

   \fill[white] (3,-1) circle (3pt);

   \fill (3,-1.1) node[below] {$3$};
    \draw  (3,-1) circle (3pt);
\end{scope}
      \end{tikzpicture}
    \caption{Gauss diagram and its interlacement graph which is bipartite.}
    \label{G+C+Gr}
\end{figure}
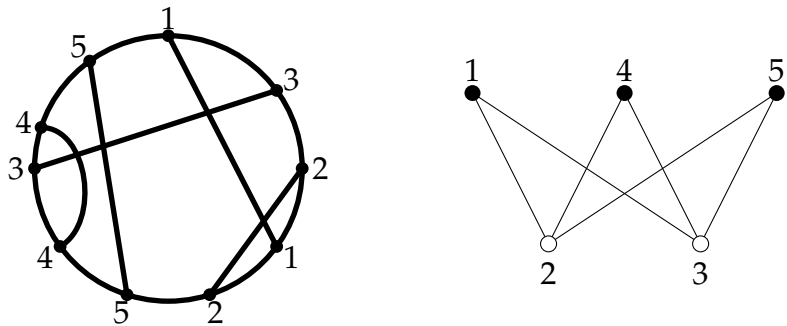
\end{example}

We see that in the criteria of realizability appears a bipartite condition for a graph. We are going to show that the problem of touch realization of a Gauss diagram (\textit{i.e.,} is there a touch curve $\zeta$ such that $\mathfrak{G} = \mathfrak{G}(\zeta)$?) can be solved in the same manner as the classical Gauss problem  by a system of equations (see Proposition \ref{reformSTZ}). To do so we prove the following lemma.

\begin{lemma}
  Let $\mathfrak{G}$ be a Gauss diagram, $M = (m_{i,j})_{1\le i,j \le n}$ its adjacency matrix. For each non-zero entry $m_{i,j}$, consider an equation $\mathbf{X}_i + \mathbf{X}_j = 1$. Thus we get a system of equation $S$. The diagram $\mathfrak{G}$ is touch-realizable if and only the system $S$ has a solution over a field $\mathsf{GF}(2).$ 
\end{lemma}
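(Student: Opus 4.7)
The plan is to reduce the statement to the known fact (quoted earlier in the paper) that touch-realizability of a Gauss diagram is equivalent to its interlacement graph $\Gamma(\mathfrak{G})$ being bipartite, and then to observe that the system $S$ is literally the 2-coloring system of that graph over $\mathsf{GF}(2)$.

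First I would set up the dictionary between the two objects. The non-zero off-diagonal entries of $M$ are exactly the edges of $\Gamma(\mathfrak{G})$: $m_{i,j} = 1$ iff chords $i$ and $j$ cross, i.e., iff $\{i,j\}$ is an edge of $\Gamma(\mathfrak{G})$. The diagonal entries are zero, so they contribute no equations. Therefore the system $S$ consists precisely of one equation $\mathbf{X}_i + \mathbf{X}_j = 1$ per edge $\{i,j\}$ of the interlacement graph. A solution $(\mathbf{X}_1,\ldots,\mathbf{X}_n) \in \mathsf{GF}(2)^n$ is, by definition, a function $V(\Gamma(\mathfrak{G})) \to \{0,1\}$ that assigns distinct values to the endpoints of every edge; that is, a proper 2-coloring. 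Hence $S$ is solvable over $\mathsf{GF}(2)$ if and only if $\Gamma(\mathfrak{G})$ is bipartite.

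Next I would invoke the criterion, recorded in the introductory sections and standard in the touch-curve literature (and essentially Lemma~3 of \cite{RT84} together with the discussion following \cite{dFOdM97}): a Gauss diagram $\mathfrak{G}$ is touch-realizable if and only if $\Gamma(\mathfrak{G})$ is bipartite. Geometrically, one thinks of each chord as a choice of placing its arc inside or outside the bounding circle of the touch curve; two crossing chords must be placed on opposite sides, which is exactly the 2-coloring condition. Composing these two equivalences yields the lemma.

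I do not expect a real obstacle here; the content of the lemma is essentially a translation of a known graph-theoretic fact into the language of linear systems over $\mathsf{GF}(2)$. The only point that deserves a brief sentence in the write-up is the well-known equivalence between 2-colorability of a graph and solvability of its ``edge-difference'' system $\mathbf{X}_i + \mathbf{X}_j = 1$ over $\mathsf{GF}(2)$; if one wants to be fully explicit, it follows from the fact that the sum of the equations around any cycle $(i_1,i_2,\ldots,i_\ell,i_1)$ is $0 \equiv \ell \pmod 2$, so consistency across all cycles is equivalent to the absence of odd cycles, i.e.\ bipartiteness.
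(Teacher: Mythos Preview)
Your argument is correct, but the logical order is reversed relative to the paper. The paper proves the lemma \emph{directly}: it interprets $\mathbf{X}_i \in \{0,1\}$ as the choice of drawing chord~$i$ inside or outside the circle, so that the equation $\mathbf{X}_i + \mathbf{X}_j = 1$ for each crossing pair encodes exactly the requirement that crossing chords go to opposite sides; solvability of $S$ is then identified with the existence of a crossing-free layout, i.e.\ touch-realizability. Only \emph{afterwards} does the paper deduce the bipartiteness characterization (Proposition~\ref{prop:planar-is-bipartite}) from this lemma. You instead take the bipartiteness criterion as given (it is indeed quoted as known in the historical overview) and reduce the lemma to it via the standard ``edge-difference system $\Leftrightarrow$ 2-coloring'' observation. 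Your brief geometric aside about inside/outside placement is in fact the paper's entire proof; promoting that sentence to the main argument would align you with the paper and make the proof self-contained, avoiding reliance on the external bipartiteness statement that the paper is about to re-derive.
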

 
\begin{proof}
 Indeed, our aim is to lay out the chords inside and outside the circle to remove all intersections. We can conveniently encode this process by saying that $\mathbf{X}_i = 0$ (\textit{resp.} $\mathbf{X}_i = 1$) means that the $i$-th chord is drawn inside (\textit{resp.} outside) the circle. It is easy to see that solving the system $S$ is equivalent to laying out the chords in a way which removes all crossings. 
\end{proof}

\begin{proposition}
 \label{prop:planar-is-bipartite}
A Gauss diagram is touch-realizable if and only if its interlacement graph is bipartite. 
\end{proposition}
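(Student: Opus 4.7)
The plan is to invoke the preceding lemma and reduce the claim to a standard fact about $2$-colorings of graphs. By the lemma, touch-realizability of $\mathfrak{G}$ is equivalent to the solvability over $\mathsf{GF}(2)$ of the system $S$ consisting of one equation $\mathbf{X}_i+\mathbf{X}_j=1$ for each non-zero entry $m_{i,j}$ of the adjacency matrix $M$, i.e.\ for each edge $(i,j)$ of the interlacement graph $\Gamma(\mathfrak{G})$. So it suffices to prove that $S$ has a solution over $\mathsf{GF}(2)$ if and only if $\Gamma(\mathfrak{G})$ is bipartite.

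First I would treat the easy direction. Suppose $\Gamma(\mathfrak{G})=(V,E)$ is bipartite with parts $V_0$ and $V_1$. Define $\mathbf{X}_i=0$ for $i\in V_0$ and $\mathbf{X}_i=1$ for $i\in V_1$. For any edge $(i,j)\in E$ the endpoints lie in different parts, so $\mathbf{X}_i+\mathbf{X}_j=1$ in $\mathsf{GF}(2)$, and $S$ is satisfied.

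For the converse, assume $S$ has a solution $(\mathbf{X}_1,\ldots,\mathbf{X}_n)\in\mathsf{GF}(2)^n$. Let $V_0=\{i:\mathbf{X}_i=0\}$ and $V_1=\{i:\mathbf{X}_i=1\}$; by the equations $\mathbf{X}_i+\mathbf{X}_j=1$ every edge of $\Gamma(\mathfrak{G})$ joins $V_0$ to $V_1$, so $\Gamma(\mathfrak{G})$ is bipartite. (Equivalently, summing the equations around any cycle $c_1,\ldots,c_\ell$ in $\Gamma(\mathfrak{G})$ gives $0\equiv\ell\pmod 2$, so every cycle has even length, which is the well-known characterisation of bipartite graphs.)

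There is no real obstacle here: the content of the proposition is entirely absorbed by the preceding lemma, and what remains is the elementary identification of a $\mathsf{GF}(2)$-valued function satisfying $\mathbf{X}_i+\mathbf{X}_j=1$ on every edge with a proper $2$-colouring of $\Gamma(\mathfrak{G})$. The only thing worth being careful about is that the system $S$ contains one equation per non-zero entry of $M$, hence (because $M$ is symmetric with zero diagonal) exactly one equation per unordered edge of $\Gamma(\mathfrak{G})$, so the correspondence between solutions of $S$ and bipartitions of $\Gamma(\mathfrak{G})$ is exact.
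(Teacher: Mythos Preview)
Your proof is correct and follows exactly the same approach as the paper: invoke the preceding lemma to reduce touch-realizability to solvability of the system $S$, then identify solutions of $S$ with proper $2$-colourings of $\Gamma(\mathfrak{G})$. The paper's own proof is a single sentence asserting this equivalence is ``easy to see'', so you have simply supplied the details the paper omits.
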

\begin{proof}
It is easy to see that a solution to the system of equations in the lemma exists if and only if the graph is bipartite. 
\end{proof}

\section{Realizability via bipartite graphs}

In this section, we aim to give a new description of the realizability of Gauss diagrams in terms of the adjacency matrix of its interlacement graphs by using STZ-criteria.

\begin{definition}\label{graph_for_Gauss}
 For a given graph $\Gamma(\mathfrak{G}) = (V,E)$ of a Gauss diagram $\mathfrak{G}$ we construct a new graph $\widetilde{\Gamma(\mathfrak{G}})$ as follows. For all pairs of vertices $v_i,v_j \in V$ such that $(v_i,v_j) \in E$ and the number of their common neighbors is odd we replace the edge $(v_i,v_j)$ by two new edges $(v_i, u_{i,j})$, and $(u_{i,j},v_j)$, where $u_{i,j}$ is a new vertex.
\end{definition}

\begin{theorem}\label{the_main}
  A Gauss diagram $\mathfrak{G}$ is realizable if and only if the evenness conditions hold and the graph $\widetilde{\Gamma(\mathfrak{G})}$ is bipartite.
\end{theorem}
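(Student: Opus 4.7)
The plan is to derive the theorem directly from Corollary \ref{cor_of_system} by re-reading the $\mathsf{GF}(2)$-system given there as a bipartiteness condition on the subdivided graph $\widetilde{\Gamma(\mathfrak{G})}$.

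First I would note that the evenness conditions are already known to be necessary for realizability (cf.\ the remark following Proposition \ref{reformSTZ}), so for both implications it is enough to argue, under the standing assumption that $\mathfrak{G}$ satisfies the evenness conditions, that $\mathfrak{G}$ is realizable if and only if $\widetilde{\Gamma(\mathfrak{G})}$ is bipartite. By Corollary \ref{cor_of_system}, realizability is equivalent to solvability over $\mathsf{GF}(2)$ of the system
\[
 \mathbf{X}_i + \mathbf{X}_j \;=\; \langle m_i, m_j \rangle + 1, \qquad (v_i, v_j) \text{ an edge of } \Gamma(\mathfrak{G}),
\]
where $\langle m_i, m_j \rangle$ is the parity of the number of common neighbours of $v_i$ and $v_j$. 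The remaining task is thus to show that this system is solvable iff $\widetilde{\Gamma(\mathfrak{G})}$ admits a proper 2-colouring.

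For the easier direction, given a proper 2-colouring $c$ of $\widetilde{\Gamma(\mathfrak{G})}$ I would set $\mathbf{X}_i := c(v_i)$. If $(v_i,v_j)$ is an edge of $\Gamma(\mathfrak{G})$ with an even number of common neighbours, this edge survives in $\widetilde{\Gamma(\mathfrak{G})}$, so $c(v_i)\neq c(v_j)$ gives $\mathbf{X}_i + \mathbf{X}_j = 1 = \langle m_i, m_j \rangle + 1$; and if $(v_i,v_j)$ has an odd number of common neighbours, the 2-path $v_i - u_{i,j} - v_j$ in $\widetilde{\Gamma(\mathfrak{G})}$ forces $c(v_i) = c(v_j)$, i.e.\ $\mathbf{X}_i + \mathbf{X}_j = 0 = \langle m_i, m_j \rangle + 1$. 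Conversely, given a solution $\{\mathbf{X}_i\}$, I would colour the original vertices by $c(v_i) := \mathbf{X}_i$ and each auxiliary vertex by $c(u_{i,j}) := \mathbf{X}_i + 1$, which also equals $\mathbf{X}_j + 1$ since $\mathbf{X}_i = \mathbf{X}_j$ whenever the edge $(v_i,v_j)$ was subdivided; a short edge-by-edge check shows that every edge of $\widetilde{\Gamma(\mathfrak{G})}$ is then properly 2-coloured.

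I do not expect a genuinely hard step: the substance of the theorem is already in Corollary \ref{cor_of_system}, and what remains is the standard equivalence between 2-colourability of a graph and the solvability of an \emph{xor}-system whose ``disagreement'' constraints are encoded as edges. The subdivision in Definition \ref{graph_for_Gauss} is precisely the device needed to encode an ``agreement'' constraint $\mathbf{X}_i + \mathbf{X}_j = 0$ as a length-two path (whose endpoints must share a colour) rather than as a direct edge (whose endpoints must differ). The one point that deserves care is checking that the auxiliary vertices $u_{i,j}$, being of degree two, do not introduce new odd cycles beyond those arising from the cycle space of $\Gamma(\mathfrak{G})$, so that bipartiteness of $\widetilde{\Gamma(\mathfrak{G})}$ faithfully captures the cycle condition of Corollary \ref{cor_for_graph}.
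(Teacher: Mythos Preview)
Your proposal is correct and follows essentially the same route as the paper: both arguments reduce to Corollary~\ref{cor_of_system}, interpret the variables $\mathbf{X}_i$ as a 2-colouring of the vertices of $\Gamma(\mathfrak{G})$, and observe that subdividing precisely the edges with an odd number of common neighbours converts each ``agreement'' constraint $\mathbf{X}_i+\mathbf{X}_j=0$ into a pair of ``disagreement'' constraints, so that solvability of the system coincides with bipartiteness of $\widetilde{\Gamma(\mathfrak{G})}$. If anything, your version is slightly more explicit than the paper's, since you spell out both directions of the equivalence (constructing a solution from a 2-colouring and vice versa), whereas the paper sketches only one direction and leaves the converse implicit.
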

\begin{proof}
 Let $M = (m_{i,j})_{1 \le i,j \le n}$ be an adjacency matrix of the $\mathfrak{G}$. By Corollary \ref{cor_of_system} the $\mathfrak{G}$ is realizable if and only if and only if the following system of equations
 \[
  \mathbf{X}_i + \mathbf{X}_j = \langle m_i, m_j \rangle + 1, \qquad (i,j) \in K
 \]
 has a solution over the field $\mathsf{GF}(2)$, where $K \subseteq \{1,\ldots, n\}\times \{1,\ldots, n\}$ is a subset such that whenever $(i,j) \in K$ then $m_{i,j} = 1$. 
 
 Next, let us decorate the graph $\Gamma(\mathfrak{G})$ as follows; mark any vertex $v_i$ by $\mathbf{X}_i$. Since $\mathbf{X}_i \in \mathsf{GF}(2)$ we can interpret this label as a color of the $v_i.$ It is clear that if $\langle m_i, m_j \rangle \equiv 1 (\bmod{2})$ then $\mathbf{X}_i = \mathbf{X}_j$,  and if $\langle m_i, m_j \rangle \equiv 0 (\bmod{2})$ then $\mathbf{X}_i \ne \mathbf{X}_j$. 
 
Further, it is clear that the conditions $m_{i,j}=1$, $\langle m_i, m_j \rangle \equiv 1 (\bmod{2})$ (\textit{resp.} $\langle m_i, m_j \rangle \equiv 0 (\bmod{2})$) can be reformulated in the terms of the graph $\Gamma(\mathfrak{G})$ as follows; $(v_i,v_j) \in E$ and the number of common neighbors for $v_i,v_j$ is odd (\textit{resp.} even).

Finally, if $(v_i,v_j) \in E$, $\langle m_i, m_j \rangle \equiv 1 (\bmod{2})$, and thus $\mathbf{X}_i = \mathbf{X}_j$, then by construction of $\widetilde{\Gamma(\mathfrak{G})}$ we have to add a new vertex $u_{i,j}$ and replace the edge $(v_i,v_j)$ by new two edges $(v_i, u_{i,j})$, $(u_{i,j},v_i)$ and hence any two vertexes with the same color will not adjacent to each other in new graph $\widetilde{\Gamma(\mathfrak{G})}$, \textit{i.e.,} we get a bipartite graph and the statement follows.
\end{proof}

\begin{remark}
Realiziability criteria proposed in Theorem~\ref{the_main} have simplest known to authors definition in terms of interlacement graphs in  a precise sense. Their formal definitions can be done via stating bipartite property for a graph $\widehat{\Gamma(\mathfrak{G}})$  \emph{definable} in terms of the interlacement graph using only \emph{first-order logic} extended by parity ($MOD_{2}$)  quantifier \cite{N00}. The previously known criteria such as in  \cite{LM,R76,RR,STZ09} require \emph{second-order} quantifiers to define them. Furthermore the criteria from  \cite{D}  also apply bipartite property checking but to a graph defined using an iterative procedure beyond  first-order logic extended by $MOD_{2}$. Further discussion of related definability questions can be found in \cite{KLV21-experimental}    
\end{remark}

\section*{Acknowledgments}
The work of the first and third named authors was supported by the Leverhulme Trust Research Project Grant RPG-2019-313.

\end{document}